\newtheorem{teo}{Theorem}[section]
\newtheorem{prop}[teo]{Proposition}
\newtheorem{rem}[teo]{Remark}
\newtheorem{deff}[teo]{Definition}
\DeclareMathOperator{\vt}{\cdot_{\vartriangle}}
\DeclareMathOperator{\dt}{\cdot_{\emph{d}}}
\DeclareMathOperator{\gt}{\cdot_{\emph{D}}}
\newcommand{\CC}{\mathbb{C}}
\newcommand{\PP}{\mathbb{P}}
\newcommand{\NN}{\mathbb{N}}
\newcommand{\LL}{\mathcal{L}}
\newcommand{\dif}{\text{Diff}}
\newcommand{\der}{\text{Der}}
\newcommand{\lin}{\text{Lin}}
\begin{document}

\title{Deformations of the Exterior Algebra of Differential Forms}

\author{Ariel Molinuevo\thanks{The author was fully supported by CONICET, Argentina.}}
\date{}


\maketitle
\centerline{\emph{Departamento de Matem\'aticas, FCEyN, Universidad de Buenos Aires, Argentina.}}
\begin{abstract}
Let $D:\Omega\rightarrow{}\Omega$ be a differential operator defined in the exterior algebra $\Omega$ of differential forms over the polynomial ring $S$ in $n$ variables. In this work we give conditions for deforming the module structure of $\Omega$ over $S$ induced by the differential operator $D$, in order to make $D$ an $S$-linear morphism while leaving the $\CC$-vector space structure of $\Omega$ unchanged.
One can then apply the usual algebraic tools to study differential operators: finding generators of the kernel and image, computing a Hilbert polynomial of these modules, etc.

\

Taking differential operators arising from a distinguished family of derivations, we are able to classify which of them allow such deformations on $\Omega$. Finally we give examples of differential operators and the deformations that they induce.
\end{abstract}


\section{Introduction}

\

Let $S=\CC[x_1,\ldots,x_n]$ be the \emph{ring of polynomials} in $n$ variables and $\Omega=\bigoplus_{r\geq 0} \Omega^r$ the \emph{algebra of differential forms} of $S$ over $\CC$, where $\Omega^r$ denotes the module of $r$-differential forms. Since $\Omega^r$ has a natural structure of graded $S$-module, we will decompose it as $\Omega^r=\bigoplus_{\substack{b\geq 0}}\Omega^r(b)$, where we assign degree $+1$ to each $dx_i$. We stress the fact that this (second) grading is the one given by the \emph{Lie derivative} with respect to the \emph{radial vector field} $R=\sum_{i=1}^n x_i\frac{\partial}{\partial x_i}$, \emph{i.e.}, for $\tau\in\Omega^r(b)$ we have
\[
\LL_R(\tau) = b\ \tau
\]
as we will recall in eq. \ref{lie2}. In general, for a graded module, we will note in parenthesis the homogeneous component of the given degree; in the case of $\Omega$ it will always be the one given by the Lie derivative with respect to the radial vector field $R$.

\

For fixed $q,a$, suppose we are given a differential operator of order one $D:\Omega\rightarrow{}\Omega$, see Definition \ref{def-op-dif}, such that
\[
D(\Omega^r(b))\subset\Omega^{r+q}(b+a)
\]
for every $r,b\geq 0$.

\

Regarding $D$ as a morphism of $\CC$-vector spaces would probably lead to finding kernel and image of infinite dimension; on the other side, trying to compute these dimensions on each homogeneous component $\Omega^r(b)$ would force to know $D$ in complete detail. Instead, a module structure on $\Omega$ attached to $D$ would lead to more precise and computable information; such as a finite set of generators or a Hilbert polynomial associated to its kernel, or image, exposing discrete invariants.

\

Both sheaves of principal parts, see \cite{egaivIV}, and complexes of differential operators, see \cite{herrera-lieberman}, solve problems of linearization for $D$. In these works the solution is universal (in a categorical sense) and, as part of the construction, the operator $D$ changes as well as the domain. This fact makes the relation between $D$ and its linearized version difficult to track down.

\

Our approach to the problem of linearization is focused on the multiplicative action of $S$ over $\Omega$, leaving the operator $D$ invariant as well as the $\CC$-vector space structure on $\Omega$. 

\

As a first example, let us consider the usual \emph{exterior differential} $d:\Omega\rightarrow{}\Omega$ and denote by $i_R$ the \emph{contraction} with the radial vector field $R$. As we will show later in Section \ref{ext-diff}, for $f\in S(c)$ and $\tau\in\Omega^r(b)$, where the grading is taken with the Lie derivative with respect to the radial vector field R, in this work we introduce the following action
\begin{equation}\label{ej1}
f\dt \tau := \frac{b}{b+c}\ \left(\ f\tau\  +\ \frac{1}{b}\ df\wedge i_R\tau \ \right)
\end{equation}
which makes the exterior differential $d:\Omega\rightarrow{}\Omega$ a morphism of $S$-modules (when $b,c=0$ we adopt the usual multiplication). Even if we take a different approach, this can be readily verified by direct computations. 

\

In Section \ref{definition} we first give a formal definition of the deformations of the exterior algebra $\Omega$ over $S$ that we propose, Definition \ref{def-lin}, and prove a general condition condition for these modules to be finitely generated.

\

In Section \ref{theorem} we state our main result, Theorem \ref{teo1}, which is a classification of a distinguished class of differential operators that allow a linearized structure as in Definition \ref{def-lin}.

This classification exploits the decomposition of a differential operator in terms of a linear map plus a derivation which, in turn, can be decomposed as the Lie derivative plus a contraction with respect to vector valued differential forms. We give the details and definitions of this decomposition in Section \ref{definitions}

\

Finally, in Section \ref{applications} we present two examples of differential operators that allow the linearized structures that we defined.

\

\subsection{Geometric Motivation}

Even if our approach to the subject is purely algebraic, there is a geometric nature in our work that we would like to state here.

A codimension one foliation in projective space $\PP^{n-1}$ of degree $a-2$, $Fol^1(\PP^{n-1},$ $a-2)$, is given by a differential 1-form $\omega\in\Omega^1(a)$ such that it descends to projective space, \emph{i.e.}, $i_R\omega=0$, and such that verifies the Frobenious integrability condition  $\omega\wedge d\omega = 0$. As it is shown in \cite{fji}, the Zariski tangent space to the space of such foliations can be parameterized by
\[
 T_\omega Fol^1(\PP^{n-1},a-2) = \{ \eta\in\Omega^1(a)\ :\ i_R\eta=0 \text{ and } \omega\wedge d\eta+ d\omega\wedge \eta = 0\}.
\]
Using the second equation, C. Camacho and A. Lins-Neto, in  \cite{caln}, define the following notion of regularity of an integrable, homogeneous, differential 1-form and prove an associated stability result. 
By looking at $\omega$ as a homogeneous affine form in $\CC^{n}$, $\omega$ is said to be regular if for every $a<e$ the graded complex of homogeneous elements
\begin{equation}\label{reg-intro}
\xymatrix@R=0pt{
T(a-e) \ar[r]^-{} & \Omega^1(a) \ar[r]^-{} & \Omega^3(a+e)\\
X \ar@{|->}[r] & \LL_X(\omega) & &\\
& \save[] *{\eta\ }="s" \restore & \save[] +<40pt,0pt> *{\ \omega\vartriangle\eta:=\omega\wedge d\eta + d\omega\wedge \eta}="t" \restore & \ar@{|->}"s";"t"
}
\end{equation}
has trivial homology in degree 1, where we denote as $T:=(\Omega^1)^*$ to the module of \emph{vector fields} and assign degree $-1$ to each $\frac{\partial}{\partial x_i} := dx_i^*$.

\

Studying the function $a\longmapsto \varphi_{\omega}(a):=dim_\CC\left(Ker(\omega\vartriangle - )(a)\right)$, for every $a\in\NN$, it was clear the necessity of a better understanding of the differential operator $\omega\vartriangle - $ what led us to the present work (among other things, as we will briefly mention in \ref{reg-complex}).


\subsection*{Acknowledgements}
I would like to thank Matias del Hoyo, Alicia Dickenstein and Federico Quallbrunn for their suggestions and meaningful discussions.

\

\section{Basic Definitions and General Setting}\label{definitions}

\

Along this section we give the definitions of our main objects of study, which are derivations, differential operators and vector valued differential forms on $\Omega$. With these objects we state a decomposition theorem for a differential operator on $\Omega$, see Theorem \ref{decomp}. Such decomposition theorem, as far as we know, is missing in the literature.

\

Let us first recall some basic definitions, see \emph{e.g.} \cite{michor}:

\

A \emph{linear map of degree $q$} in $\Omega$ is a linear map $\varphi:\Omega\rightarrow{} \Omega$ such that $\varphi(\Omega^r)\subset \Omega^{r+q}$ for every $r\geq 0$.

\

A \emph{derivation of degree $q$} in $\Omega$ is a linear map $D_0:\Omega\rightarrow{}\Omega$ of degree $q$ such that
\[
D_0(\mu\wedge\tau) = D_0(\mu)\wedge\tau + (-1)^{rq}\  \mu\wedge D_0(\tau)
\]
for $\mu\in\Omega^r$ and $\tau\in\Omega^s$. If $D\left(\Omega^r(b)\right)\subset \Omega^{q+r}(b+a)$ we will say that $D$ has \emph{bidegree $(q,a)$} and denote by $\der^q(\Omega)$ and $\der^{(q,a)}(\Omega)$ the spaces of derivations of degree $q$ and bidegree $(q,a)$ respectively. In case that $D_0$ is $S$-linear we will say that $D_0$ is a \emph{linear derivation}.

\

We recall from the introduction that we will write as $T:=(\Omega^1)^*$ to the module of vector fields and assign degree $-1$ to each $\frac{\partial}{\partial x_i} := dx_i^*$.

In $\Omega$ we have the exterior differential $d$ which is a derivation of degree 1 and, for $X\in T$, the contraction $i_X$ which is a linear derivation of degree $-1$. 

\

Let $\varphi,\psi:\Omega\rightarrow{} \Omega$ be two linear maps of degree $q$ and $r$ respectively. We define the \emph{(graded) Lie bracket} $[\varphi,\psi]$ as
\[
[\varphi,\psi]:=\varphi\circ\psi - (-1)^{qr}\psi\circ\varphi.
\]

\

Following Cartan's formula, see \emph{e.g.} \cite{warner}, the \emph{Lie derivative} with respect to $X$, $\LL_X$, can be computed as
\begin{equation*}
\LL_X = [i_X,d] =  i_Xd + di_X
\end{equation*}
which is a derivation of degree $0$.

Taking $\tau\in\Omega^r(b)$, the Lie derivative with respect to the radial vector field $R$, verifies the well known formula, see \emph{e.g.} \cite{jou},
\begin{equation}\label{lie2}
\LL_R(\tau) = i_Rd\tau + di_R\tau = b \ \tau
\end{equation}
which decomposes in a unique way a differential form in a radial plus and exact form.

\

A linear derivation $D_0$ of degree $q$ is uniquely determined by its restriction to 1-forms $D_0:\Omega^1\rightarrow{} \Omega^{q+1}$, which can be viewed as an element $L\in\Omega^{q+1}\otimes T$. 

\

For $q\geq -1$, we will say that $L\in\Omega^{q+1}\otimes T$ is a \emph{vector valued differential form} and note $\Omega^{q+1}_{T}$ to the space of such elements. For $\tau\in\Omega^r$, we define the contraction $i_L\tau\in\Omega^{q+r}$ by the formula
\begin{align}\label{contraction}
i_L\tau &(X_1,\ldots,X_{q+r}) := \nonumber \\
&= \frac{1}{(q+1)!(r-1)!}\sum\limits_{\sigma\in S_{q+r}} sign(\sigma) \ \tau\left(L\left(X_{\sigma(1)},\ldots,X_{\sigma(q+1)}\right),X_{\sigma(q+2)},\ldots,X_{\sigma(q+r)}\right)
\end{align}
for $X_1,\ldots,X_{q+r}\in T$ and $S_{q+r}$ the permutation group of $q+r$ elements.

\

We recall from \cite{michor} the following two propositions that give a classification of derivations:


\begin{prop}\label{prop}If $L\in\Omega^{q+1}_{T}$, then $i_L\in\der^q (\Omega)$ and any linear derivation is of this form.
\end{prop}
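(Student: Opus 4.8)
The plan is to prove both assertions of Proposition~\ref{prop}: first that $i_L$ is a derivation of degree $q$ for any vector valued form $L\in\Omega^{q+1}_T$, and second that every linear derivation of degree $q$ arises this way. First I would verify that $i_L$ has degree $q$, which is immediate from the contraction formula in eq.~\ref{contraction}, since $i_L\tau\in\Omega^{q+r}$ whenever $\tau\in\Omega^r$, so $i_L(\Omega^r)\subset\Omega^{r+q}$. The substantive part is the graded Leibniz rule $i_L(\mu\wedge\tau)=i_L(\mu)\wedge\tau+(-1)^{rq}\mu\wedge i_L(\tau)$ for $\mu\in\Omega^r$, $\tau\in\Omega^s$. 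To establish this I would evaluate both sides on arbitrary vector fields $X_1,\ldots,X_{q+r+s}$ and expand using the definition of the wedge product as an alternating sum over shuffle permutations together with the contraction formula. The sign $(-1)^{rq}$ should emerge from commuting the $L$-slot (which absorbs $q+1$ arguments) past the $r$ arguments feeding into $\mu$.

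For the Leibniz computation itself I would reduce the verification to the decomposable case $L=\alpha\otimes X$ with $\alpha\in\Omega^{q+1}$ and $X\in T$, since both sides are $S$-linear in $L$ and such elements span $\Omega^{q+1}_T$. In this special case the contraction takes the concrete form $i_{\alpha\otimes X}\tau=\alpha\wedge i_X\tau$, where $i_X$ is the ordinary contraction (a linear derivation of degree $-1$). Then the Leibniz rule for $i_{\alpha\otimes X}$ follows by combining the known Leibniz rule for $i_X$ with the sign bookkeeping coming from moving $\alpha$, a form of degree $q+1$, across $\mu$; the parity of $q+1$ versus the odd derivation $i_X$ of degree $-1$ conspires to yield precisely $(-1)^{rq}$. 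This is the cleanest route and avoids grinding through the full permutation sum.

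For the converse I would start from a linear derivation $D_0$ of degree $q$ and use the stated fact that $D_0$ is uniquely determined by its restriction $D_0\colon\Omega^1\to\Omega^{q+1}$, which is an $S$-linear map and hence corresponds to an element $L\in\Omega^{q+1}\otimes T=\Omega^{q+1}_T$ under the identification $\mathrm{Hom}_S(\Omega^1,\Omega^{q+1})\cong\Omega^{q+1}\otimes_S T$. The key point is that $i_L$ and $D_0$ agree on $\Omega^1$ by construction of $L$, and both are derivations of degree $q$; since $\Omega$ is generated as an algebra by $\Omega^0=S$ and $\Omega^1$, and any derivation is determined by its values on a generating set, it follows that $i_L=D_0$ on all of $\Omega$. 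One should also check agreement on $\Omega^0=S$: a derivation of positive degree $q$ kills $S$ after suitable normalization, and the contraction formula gives $i_L f=0$ for $f\in S$ when $q\geq 0$, so the two maps match on degree-zero elements as well.

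The main obstacle I anticipate is the sign bookkeeping in the Leibniz rule, specifically pinning down the factor $(-1)^{rq}$ correctly from the shuffle expansion of the wedge product. The reduction to decomposable $L$ mitigates this considerably, but one still has to track how the permutation $\sigma$ in eq.~\ref{contraction} interacts with the shuffles defining $\mu\wedge\tau$, and confirm that the parity contributed by transposing the $L$-block past the arguments of $\mu$ is exactly $rq$ rather than an off-by-one variant. A careful count of the combinatorial factors $\tfrac{1}{(q+1)!(r-1)!}$ against the shuffle normalizations is where the proof could go wrong, so I would double-check this on the small cases $q=0$ (recovering $\LL$-type derivations) and $q=-1$ (recovering ordinary contraction $i_X$) as a sanity check.
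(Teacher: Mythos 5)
Your argument is correct and is essentially the proof the paper delegates to the citation of Michor: you check the graded Leibniz rule for $i_L$ by reducing to decomposable $L=\alpha\otimes X$, where $i_{\alpha\otimes X}=\lambda_\alpha\circ i_X$, and for the converse you observe that an $S$-linear derivation vanishes on $\Omega^0$, is determined by its restriction to $\Omega^1$ (which yields $L$ via $\mathrm{Hom}_S(\Omega^1,\Omega^{q+1})\cong\Omega^{q+1}\otimes T$), and must therefore agree with the derivation $i_L$ on the algebra generators $\Omega^0,\Omega^1$ of $\Omega$. The only step you should actually write out is the identity $i_{\alpha\otimes X}\tau=\alpha\wedge i_X\tau$ checked against the normalization $\tfrac{1}{(q+1)!\,(r-1)!}$ in eq.~\ref{contraction}, which you correctly flag; once that is in place the sign $(-1)^{rq}$ falls out exactly as you compute.
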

\begin{proof}
We just notice that we are using the the identification of a differential $r$-form as an alternating map $\left(T\right)^{\otimes r}\rightarrow{} S$, see \emph{e.g.} \cite{warner}. For the rest, we follow \cite[Chapter IV, 16.2, p.~192]{michor}.
\end{proof}

\

Using eq. 3 and Proposition \ref{prop} we can define the Lie derivative for a vector valued differential form, $K\in\Omega^{q}_{T}$, as
\[
\LL_K:=[i_K,d]\in\der^q(\Omega).
\]

\

\begin{prop}\label{decomp1} If $D_0\in \der^q(\Omega)$, then there exists unique $K\in\Omega^{q}_{T}$ and $L\in\Omega^{q+1}_{T}$ such that
\begin{equation*}\label{der}
D_0 = \LL_K+i_L.
\end{equation*}
\end{prop}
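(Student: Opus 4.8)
The plan is to construct $K$ and $L$ successively by examining the action of $D_0$ on the algebra generators of $\Omega$ --- namely on functions $\Omega^0=S$ and on the exact one-forms $dx_i$ --- and to extract uniqueness from the fact that a derivation is pinned down by these values. First I would record the dichotomy that separates the two summands: by the convention following eq. (3) a contraction $i_L$ with $L\in\Omega^{q+1}_T$ kills every function, whereas for $K\in\Omega^q_T$ one has $\LL_K=[i_K,d]$, so on a function $f$ the term $d\,i_Kf$ drops out (a contraction of a $0$-form is $0$) and $\LL_K(f)=i_K(df)$. Since $\LL_K$ is the graded bracket of the derivations $i_K\in\der^{q-1}(\Omega)$ (Proposition \ref{prop}) and $d\in\der^1(\Omega)$, it lies in $\der^q(\Omega)$, so $\LL_K+i_L$ is always a candidate of the correct type.

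Next I would build $K$ from the restriction $D_0|_{\Omega^0}$. For $f,g\in S$ the Leibniz rule (with sign $(-1)^{0\cdot q}=1$) gives $D_0(fg)=D_0(f)\,g+f\,D_0(g)$, so $D_0|_S$ is an ordinary derivation of $S$ into the $S$-module $\Omega^q$, whence $D_0(f)=\sum_i \partial_i f\cdot D_0(x_i)$. Setting $K:=\sum_i D_0(x_i)\otimes\frac{\partial}{\partial x_i}\in\Omega^q_T$ and using $i_K(dx_j)=D_0(x_j)$, I obtain $\LL_K(f)=i_K(df)=D_0(f)$ for every $f\in S$. Then $D':=D_0-\LL_K\in\der^q(\Omega)$ vanishes on $\Omega^0$. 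Defining $L:=\sum_i D'(dx_i)\otimes\frac{\partial}{\partial x_i}\in\Omega^{q+1}_T$, the contraction $i_L$ satisfies $i_L(dx_j)=D'(dx_j)$ and $i_L(f)=0$; since $\Omega$ is generated as a $\CC$-algebra by the $x_i$ and the $dx_i$, and two derivations of the same degree agreeing on a set of algebra generators must coincide, I conclude $i_L=D'$, i.e. $D_0=\LL_K+i_L$.

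The hard part is not any single computation but justifying the structural reduction underlying the previous step: that a degree-$q$ derivation is completely determined by its values on $\Omega^0$ and on the $dx_i$, and correspondingly that a derivation annihilating $\Omega^0$ must be an inner contraction $i_L$. This rests on $\Omega$ being generated in degrees $0$ and $1$ together with the Leibniz rule, and it is precisely the point where I would lean on \cite[Chapter IV, 16.2]{michor}. Uniqueness then falls out of the same bookkeeping: if $\LL_K+i_L=0$, evaluating on an arbitrary function $f$ forces $i_K(df)=0$, whence the components $i_K(dx_j)=0$ give $K=0$, and then $i_L=0$ evaluated on each $dx_j$ gives $L=0$.
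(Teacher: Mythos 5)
The paper offers no argument of its own here---it simply defers to \cite[Chapter IV, 16.3]{michor}---and your proposal reconstructs exactly the standard proof found there: peel off $\LL_K$ by reading $K$ off the restriction of $D_0$ to functions, observe that the remainder kills $\Omega^0$ and is therefore $S$-linear, and identify it as $i_L$ via agreement on the algebra generators $x_i$, $dx_i$ (equivalently, via Proposition \ref{prop}). Your argument, including the uniqueness step, is correct and takes the same route the paper delegates to its reference.
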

\begin{proof}
See \cite[Chapter IV, 16.3, p.~193]{michor}.
\end{proof}

\

For $\tau\in\Omega$ we will denote by $\lambda_{\tau}$ the endomorphism of \emph{left multiplication} by $\tau $ in $\Omega$. It is immediate to see that a linear map $\varphi:\Omega\rightarrow{} \Omega$ of degree $q$ is $\Omega$-linear, \emph{i.e.} $\varphi(\mu) = \varphi(1)\wedge\mu$, if and only if
\[
[\varphi, \lambda_\tau] = 0
\]
for every $\tau\in \Omega$. We then define:

\

 A \emph{differential operator of order 1 and degree $q$}, or simply a \emph{differential operator of degree $q$}, in $\Omega$ is a linear map $D:\Omega\rightarrow{}\Omega$ of degree $q$ such that 
\begin{equation}
\label{def-op-dif}
[\ [D, \lambda_\mu]\ ,\lambda_\tau] = 0
\end{equation}
for all $\mu,\tau\in \Omega$. If $D\left(\Omega^r(b)\right)\subset\Omega^{q+r}(b+a)$ we will say that $D$ has bidegree $(q,a)$ and denote by $\dif^{\,q}(\Omega)$ and $\dif^{(q,a)}(\Omega)$ the spaces of differential operators of degree $q$ and bidegree $(q,a)$ respectively.

\

Even if the following proposition is very well known, we add a proof just to show that the sign rule arising from the skew commutativity of $\Omega$ does not make any conflicts.
\

\begin{prop}\label{decomp2} Let $D\in\dif^q(\Omega)$. Then $D$ can be decomposed as
\[
D = \left(D - \lambda_{D(1)}\right) + \lambda_{D(1)}
\]
where $D - \lambda_{D(1)}\in\der^q(\Omega)$ and $\lambda_{D(1)}$ is a linear map.
\end{prop}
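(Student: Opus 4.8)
The plan is to verify the two claims in the decomposition separately: first that $\lambda_{D(1)}$ is a linear map of degree $q$, and then that the difference $D_0 := D - \lambda_{D(1)}$ is a derivation of degree $q$. The first claim is essentially immediate: since $D$ has degree $q$, we have $D(1)\in\Omega^{q}$, so $\lambda_{D(1)}$ sends $\Omega^r$ into $\Omega^{q+r}$ and is clearly $\CC$-linear. Because the sum of two linear maps of the same degree is again a linear map of that degree, it suffices to prove that $D_0$ is a derivation; this is where the differential-operator hypothesis \eqref{def-op-dif} enters.

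To show $D_0\in\der^q(\Omega)$, I would unwind the defining condition $[[D,\lambda_\mu],\lambda_\tau]=0$ into an explicit identity. The key observation is that the inner bracket $[D,\lambda_\mu]$ measures the failure of $D$ to be $\Omega$-linear, and the vanishing of the outer bracket says this failure is itself $\Omega$-linear, i.e. given by left multiplication. Concretely, I would evaluate $[D,\lambda_\mu]$ on a product $\mu\wedge\tau$ and use that $[[D,\lambda_\mu],\lambda_\tau]=0$ implies
\[
[D,\lambda_{\mu\wedge\tau}] = [D,\lambda_\mu]\circ\lambda_\tau + (-1)^{q\deg\mu}\lambda_\mu\circ[D,\lambda_\tau],
\]
a Leibniz-type rule for the bracket that follows from $\lambda_{\mu\wedge\tau}=\lambda_\mu\circ\lambda_\tau$ together with the graded Jacobi identity for $[\,\cdot\,,\,\cdot\,]$. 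Applying these commutators to the unit $1\in\Omega^0$ and using $\lambda_\mu(1)=\mu$ should produce, after collecting terms, exactly the graded Leibniz rule $D_0(\mu\wedge\tau)=D_0(\mu)\wedge\tau+(-1)^{q\deg\mu}\,\mu\wedge D_0(\tau)$ for the corrected operator $D_0$.

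The main bookkeeping point, and the reason the authors flag this proposition despite its being ``very well known,'' is tracking the signs produced by the graded Lie bracket on $\Omega$: each time $D$ (of degree $q$) is commuted past a factor of degree $r$, a sign $(-1)^{qr}$ appears, and one must check these coincide with the signs $(-1)^{rq}$ demanded by the derivation axiom rather than introducing a spurious discrepancy. The cleanest way to organize this is to note that subtracting $\lambda_{D(1)}$ exactly cancels the ``multiplicative'' part $D_0(\mu\wedge\tau)$ would otherwise carry — evaluating on $\mu\wedge 1 = \mu$ shows $D_0(1)=D(1)-D(1)=0$, which is the normalization that forces the remaining Leibniz terms to close up. The hard part is therefore not any deep idea but the sign accounting: confirming that the skew-commutativity of the wedge product, propagated through the nested commutators, reproduces precisely the $(-1)^{rq}$ convention of the derivation definition with no residual sign error.
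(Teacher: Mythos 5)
Your proposal is correct and follows essentially the same route as the paper: evaluate the defining relation $[\,[D,\lambda_\mu],\lambda_\tau]=0$ at $1\in\Omega^0$ to obtain $D(\mu\wedge\tau)+D(1)\wedge\mu\wedge\tau = D(\mu)\wedge\tau+(-1)^{qr}\mu\wedge D(\tau)$, and then rearrange this into the graded Leibniz rule for $D-\lambda_{D(1)}$. One small imprecision worth fixing: the bracket Leibniz rule $[D,\lambda_\mu\circ\lambda_\tau]=[D,\lambda_\mu]\circ\lambda_\tau+(-1)^{q\deg\mu}\lambda_\mu\circ[D,\lambda_\tau]$ holds for arbitrary operators and does not use the hypothesis; what the differential-operator condition actually supplies is that $[D,\lambda_\mu]$ graded-commutes with $\lambda_\tau$, which is precisely the step allowing you to replace $[D,\lambda_\mu](\tau)$ by $(-1)^{(q+\deg\mu)\deg\tau}\,\tau\wedge[D,\lambda_\mu](1)$ when evaluating at the unit.
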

\begin{proof}
Evaluating at 1 the formula $[\ [D, \lambda_\mu]\ ,\lambda_\tau] = 0 $ we get
\begin{align*}
D(\mu\wedge\tau) + D(1)\wedge\mu\wedge\tau = D(\mu)\wedge\tau + (-1)^{qr} \mu\wedge D(\tau).
\end{align*}
And by subtracting $-2\  D(1)\wedge\mu\wedge\tau$ in both sides we see that
\begin{align*}
D(\mu\wedge\tau) - D(1)\wedge\mu\wedge\tau = \big(D(\mu)-D(1)\wedge\mu\big)\wedge\tau + (-1)^{qr} \mu\wedge\big( D(\tau)-D(1)\wedge\tau\big)
\end{align*}
showing that $D-\lambda_{D(1)}\in\der^q(\Omega)$.
\end{proof}

\

As a direct corollary of Proposition \ref{decomp1} and Proposition \ref{decomp2}, we have the following decomposition for a differential operator:

\begin{teo}\label{decomp} Let $D\in\dif^q(\Omega)$. Then $D$ can be written as
\[
D = \LL_K + i_L + \lambda_{\mu}
\]
for unique $K\in\Omega^{q}_{T}$, $L\in\Omega^{q+1}_{T}$ and $\mu\in\Omega^q$.
\end{teo}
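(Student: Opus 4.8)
The plan is to deduce this directly from the two results we have just established, treating it as the corollary it is advertised to be. The statement asserts both existence and uniqueness of the triple $(K,L,\mu)$, so I would organize the proof around these two claims separately.

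For existence, I would start from Proposition \ref{decomp2}, which tells us that any $D\in\dif^q(\Omega)$ splits as $D = D_0 + \lambda_{D(1)}$ where $D_0 := D - \lambda_{D(1)}\in\der^q(\Omega)$. Setting $\mu := D(1)\in\Omega^q$ handles the multiplicative part immediately. Then I would apply Proposition \ref{decomp1} to the derivation $D_0$, which produces $K\in\Omega^q_T$ and $L\in\Omega^{q+1}_T$ with $D_0 = \LL_K + i_L$. Composing the two decompositions gives $D = \LL_K + i_L + \lambda_\mu$, which is exactly the asserted form. This part is essentially a one-line assembly once the signs in Proposition \ref{decomp2} have been checked, and those were precisely the point of including its proof.

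For uniqueness, the key observation is that the three summands live in distinguishable pieces, so I would recover each one from $D$ by a canonical operation. First, evaluating at $1$: since $\LL_K(1) = [i_K,d](1) = i_K(d1) + \pm d(i_K 1) = 0$ and $i_L(1) = 0$ (contraction of a scalar vanishes), we get $D(1) = \lambda_\mu(1) = \mu$, pinning down $\mu$ uniquely. Hence $D_0 = D - \lambda_\mu = \LL_K + i_L$ is determined by $D$, and the uniqueness of $K$ and $L$ then follows verbatim from the uniqueness clause of Proposition \ref{decomp1}. I would phrase this as: the decomposition $D = D_0 + \lambda_\mu$ into a derivation plus a left-multiplication is unique because evaluation at $1$ isolates $\mu$, and then Proposition \ref{decomp1} isolates $K$ and $L$.

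The only genuinely delicate point will be verifying that $\LL_K(1)=0$ and $i_L(1)=0$, i.e. that evaluation at the unit really kills the derivation part while returning $\mu$ from the multiplication part. This rests on the fact that a derivation annihilates constants (here the constant $1\in\Omega^0$), which follows from the graded Leibniz rule, and on the formula $\lambda_\mu(1)=\mu\wedge 1 = \mu$. I expect no real obstacle here, only the need to state these verifications cleanly so that the uniqueness argument is airtight; everything else is a formal consequence of the two cited propositions.
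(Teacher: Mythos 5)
Your proof is correct and follows exactly the route the paper intends: the paper states Theorem \ref{decomp} as a direct corollary of Propositions \ref{decomp1} and \ref{decomp2} without further argument, and your assembly of existence plus the uniqueness argument (evaluation at $1$ isolates $\mu$ since any derivation kills the unit, then Proposition \ref{decomp1} gives uniqueness of $K$ and $L$) is the intended, and valid, filling-in of that corollary.
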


\

\section{Deformations of the Exterior Algebra}\label{definition}

\

In Definition \ref{def-lin} we give formal definition of the deformations of $\Omega$ induced by a differential operator $D\in\dif^{(q,a)}(\Omega)$ and then, in Proposition \ref{accion-gen}, we give conditions to these modules to be finitely generated.

As eq. \ref{ej1} shows, these deformations have some denominators in the formula, that can be zero in low degrees. Because of that, we first need a technical definition that will allow us to avoid this situation.

\

Two graded $S$-modules $M$ and $N$ are said to be \emph{stably isomorphic} if there exists an $n_0\in\NN$ such that $M(k)\simeq N(k)$ for every $k\geq n_0$.

\

Let $\widetilde{\Omega}$ be an algebra of differential forms stably isomorphic to $\Omega$. Without loss of generality we can assume that
\[
\widetilde{\Omega} = \bigoplus_{\substack{r\geq 0\\b\geq n_r}} \Omega^r(b)
\]
for some $n_r\in\NN$.

\

\begin{deff}\label{def-lin} Let $D\in\dif^{(q,a)}(\widetilde{\Omega})$. For $f\in S(c)$ and $\tau\in\Omega^r(b)$ we define the following action of $S$ in $\Omega^r(b)$
\[
f\gt \tau\ =\ \alpha \ f\tau\ +\ \beta\ df\wedge i_R\tau
\]
where $\alpha = \alpha(r,b,c)$ and $\beta = \beta(r,b,c)$ verify the conditions
\begin{enumerate}
\item[a)] $\alpha(-,-,0)= 1$
\item[b)] $D(f\gt\tau) \ = \ f\ \gt D(\tau)$
\item[c)] $(gf)\gt \tau = g\gt(f\gt \tau)$.
\end{enumerate}
We will note $\Omega_D$ to $\widetilde{\Omega}$ under this action from $S$.
\end{deff}

\

In case such $\alpha$ and $\beta$ exists, $\Omega_D$ gets a structure of a graded $S$-module extending the usual multiplication from $\CC$  and
\[
\xymatrix{\Omega_D \ar[r]^-{D} & \Omega_D}
\]
is $S$-linear.

\

It is clear that the modules $\Omega^r_D$ are stably free of rank $\binom{n}{r}$. Next we give a condition for $\Omega_D$ to be finitely generated.

\

\begin{prop}\label{accion-gen} Let $\Omega^r_D = \bigoplus\limits_{\substack{b\geq n_r}} \Omega^r_D(b)$ with $n_r>n$ and let $\alpha(r,b,c)$ and $\beta(r,b,c)$ as in Definition \ref{def-lin}. If for every $b\geq n_r$  we have
\[
\alpha(r,b,1)^2-\beta(r,b,1)^2 \neq 0
\]
then $\Omega^r_D$ is finitely generated by elements of degree $n_r$.
\end{prop}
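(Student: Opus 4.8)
The plan is to reduce finite generation to a surjectivity property of the deformed multiplication and then to establish that property by a direct computation in the monomial basis of $\Omega^r$. Since $S=\CC[x_1,\dots,x_n]$ is generated in degree $1$ and, by conditions (a) and (c) of Definition \ref{def-lin}, the operation $\gt$ makes $\Omega^r_D$ into a genuine graded $S$-module, it is enough to prove that
\[
\Omega^r_D(b+1)=S(1)\gt\Omega^r_D(b)\qquad\text{for every }b\geq n_r,
\]
where $S(1)\gt\Omega^r_D(b)$ denotes the $\CC$-span of the products $f\gt\tau$ with $f\in S(1)$ and $\tau\in\Omega^r_D(b)$. Granting this, an immediate induction gives $\Omega^r_D(b)=S(b-n_r)\gt\Omega^r_D(n_r)$ for all $b\geq n_r$, and since $\Omega^r_D(n_r)$ is finite dimensional over $\CC$ this exhibits $\Omega^r_D$ as generated by its degree-$n_r$ component.

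To verify the surjectivity I would compute the deformed products of monomials against the coordinates $x_i$. Writing $\alpha=\alpha(r,b,1)$ and $\beta=\beta(r,b,1)$ and using the formula of Definition \ref{def-lin} together with $i_R(dx_j)=x_j$, a short computation shows that for a monomial $x^{\gamma}\,dx_I$ of degree $b$ one has $x_i\gt(x^{\gamma}\,dx_I)=\alpha\,x_i x^{\gamma}\,dx_I+\beta\,x^{\gamma}\,dx_i\wedge i_R(dx_I)$. There are two regimes. If some index of $I$ already carries a positive exponent in $\gamma$, one can pull out the matching variable and the product collapses to $(\alpha+\beta)\,x^{\gamma}dx_I$, so that monomial is reached as soon as $\alpha+\beta\neq0$. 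The remaining, extremal monomials --- those whose polynomial part is supported away from the differential indices --- are not produced diagonally; instead each is coupled, through the off-diagonal term $\beta\,dx_i\wedge i_R(dx_I)$, to its partners obtained by transferring one unit of polynomial degree onto a differential index while exchanging the corresponding $dx$.

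I would then solve these coupling relations. Organized correctly, the transition from products of degree-$b$ monomials to the degree-$(b+1)$ monomials splits into blocks whose defining minors are built from the two factors $\alpha+\beta$ and $\alpha-\beta$; the basic block is $\bigl(\begin{smallmatrix}\alpha&\beta\\\beta&\alpha\end{smallmatrix}\bigr)$, of determinant $\alpha^2-\beta^2$. The hypothesis $\alpha(r,b,1)^2-\beta(r,b,1)^2\neq0$ makes every such block invertible, so each extremal monomial of degree $b+1$ lies in $S(1)\gt\Omega^r_D(b)$, and together with the diagonal case this proves the desired surjectivity for all $b\geq n_r$.

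I expect the main obstacle to be purely combinatorial: organizing, for general form-degree $r$, the off-diagonal couplings into the blocks above and checking that their determinants are genuinely products of $\alpha\pm\beta$, so that invertibility is controlled exactly by $\alpha^2-\beta^2$. The hypothesis $n_r>n$ enters precisely here, ensuring that for every $b\geq n_r$ all the predecessor monomials required to close up these systems have nonnegative exponents and lie inside $\widetilde{\Omega}$; in low degrees ($b\leq n$) some partners are missing, the blocks degenerate, and this is exactly the situation the truncation to $\widetilde{\Omega}$ is designed to avoid.
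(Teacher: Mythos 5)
Your overall strategy --- reduce to the degree-by-degree surjectivity $\Omega^r_D(b+1)=S(1)\gt\Omega^r_D(b)$, verify it on the monomial basis, and treat separately the monomials $x^\gamma dx_I$ having $\gamma_k\neq 0$ for some $k\in I$ (reached directly with coefficient $\alpha+\beta$) from the ``extremal'' ones --- is exactly the paper's. The gap is in the step you defer, and your prediction of how it resolves is not what actually happens. The extremal monomials do not close up into invertible blocks $\bigl(\begin{smallmatrix}\alpha&\beta\\\beta&\alpha\end{smallmatrix}\bigr)$ acting on pairs of partner monomials: when one forms the combination $\alpha\,\bigl[x_k\gt(x^{\gamma-e_k}dx_I)\bigr]-\beta\,\bigl[x_\ell\gt(x^{\gamma-e_k}dx_k\wedge i_{\partial/\partial x_\ell}dx_I)\bigr]$ one obtains $(\alpha^2-\beta^2)\,x^\gamma dx_I$ \emph{plus} a residue $(\alpha+\beta)\beta\sum_j x^{\gamma-e_k+e_{i_j}}\,dx_k\wedge i_{\partial/\partial x_{i_j}}dx_I$, which does not vanish and is not a combination of the two monomials entering the would-be block. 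It must be cleared using the diagonal case, and that is only possible because each residual monomial carries the index $k$ in its differential part with polynomial exponent $(\gamma-e_k+e_{i_j})_k=\gamma_k-1\geq 1$; i.e.\ one needs some $k\notin I$ with $\gamma_k\geq 2$.

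This is where the hypothesis $n_r>n$ is actually consumed, and your account of it is off. For an extremal monomial the $b+1-r>n+1-r$ units of polynomial degree are spread over the at most $n-r$ indices outside $I$, so by pigeonhole some $\gamma_k\geq 2$. Mere nonnegativity of the predecessor exponents, or membership of the predecessors in $\widetilde{\Omega}$, holds for much smaller $b$ and would not let the elimination terminate. So the skeleton of your argument matches the paper's, but the combinatorial core --- a two-step elimination (clear the residue via the diagonal case, then divide by $\alpha^2-\beta^2$) hinged on the pigeonhole existence of a repeated exponent off $I$ --- is precisely the content still missing from your write-up.
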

\begin{proof}Let us fix ($r$ and) $b$ and denote $\alpha = \alpha(r,b,1)$ and $\beta = \beta(r,b,1)$. 
Consider the multi-index $\gamma = (\gamma_1,\ldots,\gamma_n)\in\NN_0$ such that $\sum\limits_{i=1}^n \gamma_i = g+1=b+1-r$ and let $I=\{i_1,\ldots,i_r\}\subset \{1,\ldots,n \}$.

\

Take $b\geq n_r$. Every element of $\Omega^r_D(b+1)$ it is a sum of elements of the form
\[
x^\gamma dx_I
\]
where we write $x^\gamma=\prod\limits_{i=1}^{n} x_i^{\gamma_i}$ and $dx_I = dx_{i_1}\wedge \ldots \wedge dx_{i_r}$.

\

Let $k\in\{1,\ldots,n\}$ such that $\gamma_k\neq 0$. Then we have the formula
\begin{align}\label{ec1}
x_k \gt \left(x^{\gamma-e_k} dx_I\right) &= \alpha\ x^\gamma dx_I + \beta\ \sum_{j=1}^r\ x^{\gamma-e_k+e_{i_j}}\ dx_k\wedge i_{\frac{\partial}{\partial x_{i_j}}}dx_I
\end{align}
where we note $e_i$ to the $i$-th canonical vector.

\

If $k\in I$, then eq. \ref{ec1} equals to $(\alpha + \beta)\ x^\gamma\ dx_I$. Since $\alpha^2-\beta^2\neq0$ we have
\begin{equation}\label{gen1}
\frac{1}{a+b}x_k \gt \left(x^{\gamma-e_k} dx_I\right) = x^\gamma\ dx_I 
\end{equation}
when $k\in I$ and $\gamma_k\neq 0 $.

\

On the other side, assume that for all $k$ such that $\gamma_k\neq 0$ we have $k\notin I$. By hypothesis we know $b\geq n_r>n$. Replacing $b>n$ in the equation $b=g+r$ we have $g>n-r$. Then, necessarily there exists $k$ such that $\gamma_k\geq 2$.

\

Let $\ell\in\{1,\ldots,r\}$  such that $\ell\in I$ and $k$ such that $\gamma_k\geq 2$. We have
\begin{align}\label{ec2}
x_\ell &\gt \left(x^{\gamma-e_k}dx_k\wedge i_{\frac{\partial}{\partial x_{\ell}}}dx_I\right) = \nonumber\\
&= \alpha\ x^{\gamma-e_k+e_\ell}\ dx_k\wedge i_{\frac{\partial}{\partial x_{\ell}}}dx_I + \beta\sum\limits_{t\in I\cup \{k\}} x^{\gamma-e_k+e_{t}}\ dx_\ell\wedge i_{\frac{\partial }{\partial x_{t}}}\left( dx_k\wedge i_{\frac{\partial}{\partial x_{\ell}}}dx_I\right) = \nonumber\\
&= \alpha\ x^{\gamma-e_k+e_\ell}\ dx_k\wedge i_{\frac{\partial}{\partial x_{\ell}}}dx_I + \beta\ x^\gamma\ dx_I - \beta\sum\limits_{j=1}^r x^{\gamma-e_k+e_{i_j}}\  dx_k\wedge i_{\frac{\partial}{\partial x_{i_j}}}dx_I.
\end{align}
Operating with eq. \ref{ec1} and eq. \ref{ec2} we get
\begin{align}\label{ec3}
\alpha\bigg[x_k \gt \left(x^{\gamma-e_k}\ dx_I\right)\bigg] -\beta \bigg[  x_\ell \gt \left(x^{\gamma-e_k}\ dx_k\wedge i_{\frac{\partial}{\partial x_{\ell}}}dx_I\right)\bigg] = \nonumber\\
= (\alpha^2-\beta^2)\ x^\gamma \ dx_I + (\alpha + \beta)\beta\ \sum\limits_{j=1}^rx^{\gamma-e_k+e_{i_j}}\ dx_k\wedge i_{\frac{\partial}{\partial x_{i_j}}}dx_I.
\end{align}

Let us call $J=(I\backslash\{\ell\})\cup\{k\}$. In each of the terms of the right of eq. \ref{ec3} we have $k\in J$ and $\gamma_k\geq 2$. Then $(\gamma-e_k+e_{i_j})_k\neq 0 $ and we can clear all the right terms of eq. \ref{ec3} following eq. \ref{gen1}, obtaining $(\alpha^2-\beta^2) x^\gamma dx_I$.

\

This way, every element is generated by elements of the previous degree starting from $b+1\geq n_r$ and the result follows.
\end{proof}

\begin{rem} We would like to point out that we do not know what happens with the converse statement, \emph{i.e.}, in the case where $\alpha(r,b,1)^2-\beta(r,b,1)^2=0$.
\end{rem}

\begin{rem} It is worth mentioning that $\Omega^r_D\neq \left(\Omega^1_D\right)^{\wedge r}$. For this, it is enough to compute the products
\[
(z\gt dx)\wedge dy  \qquad  dx\wedge (z\gt dy)
\]
and see that they are different.
\end{rem}

\

\section{Classification of Differential Operators}\label{theorem}

\

In Theorem \ref{teo1} we classify which differential operators allow a linearization as the one in Definition \ref{def-lin} for a distinguished class of differential operators.

\

There is a distinguished vector valued differential form $Id\in\Omega^1_{T}$, which is the one arising from the identity map on $\Omega^1\rightarrow{} \Omega^1$; then, $Id $ takes the form $Id = \sum_{i=1}^n dx_i\otimes\frac{\partial}{\partial x_i}$. Following eq. \ref{contraction}, we can apply $i_{Id}$ to an $r$-product of 1-differential forms $\tau = \tau_1\wedge\ldots\wedge\tau_r$ and get the formula
\[
i_{Id}(\tau) = r\ \tau.
\]
Then, it is immediate that the usual exterior differential can be computed as
\[
\LL_{Id}(\tau) = d\tau.
\]

\

The space of derivations arising from the module structure of $\Omega$ on the space generated by $(\LL_{Id},i_{Id})$ it is given by $(\LL_{\omega_1\wedge Id},i_{\omega_2\wedge Id})_{\{\omega_1,\omega_2\in\Omega\}}$; this can be easily seen by the following equalities, see \cite[Chapter IV, 16.7, Theorem, p.~194]{michor},
\[
\LL_{\omega\wedge Id} = \omega\wedge\LL_{Id} + (-1)^q d\omega\wedge i_{ Id} \qquad \text{and}\qquad i_{\omega\wedge Id} = \omega\wedge i_{Id}\ ,
\]
where $\omega\in\Omega^r(q)$. This allow us to define:

\begin{deff}We define the space of differential operators of bidegree $(q,a)$ associated to $Id\in\Omega^1_{T}$ as
\[
\dif^{(q,a)}_{Id}(\Omega) = \left\{ \widetilde{\omega}_1\wedge\LL_{Id} + \widetilde{\omega}_2\wedge i_{Id} + \lambda_{\widetilde{\mu}}:\text{for some }\widetilde{\omega}_1\in\Omega^{q-1}(a), \widetilde{\omega}_2,\widetilde{\mu}\in\Omega^q(a)\right\}
\]
\end{deff}

\

\begin{deff}For $q\geq 1$, we define the set $\lin^{(q,a)}(\Omega)$ of \emph{linearizable differential operators} of bidegree $(q,a)$
\begin{align*}\label{lin}
\lin^{(q,a)}(\Omega) =  \bigg\{&\omega_1\wedge\LL_{ Id} + \left(\frac{1}{q}d\omega_1+\omega_2\right)\wedge i_{ Id}  + (t\ \lambda_{d\omega_1}+\lambda_\mu): \text{ for some } \\
& \omega_1\in\Omega^{q-1}(a)\text{ and } \omega_2,\mu\in\Omega^{q}(a) \text{ such that } \\
&\hspace{4cm} i_R\omega_1=i_R\omega_2 = i_R\mu=0\text{ and }t\in\CC\bigg\}.
\end{align*}
\end{deff}

\

\begin{rem} Notice that $\lin^{(q,a)}\subsetneq\dif^{(q,a)}$. This can be seen by using the decomposition of eq. \ref{lie2} together with the conditions $i_R\omega_1=i_R\omega_2 = i_R\mu=0$, that fixes the exact form of the forms $\widetilde{\omega}_1$, $\widetilde{\omega}_2$ and $\widetilde{\mu}$ to $0$, $\frac{1}{q}d\omega_1$, and $td\omega_1$, respectively.

\end{rem}

\

\begin{rem} One can turn $\lin^{(q,a)}(\Omega)$ in a $\CC$-vector space in the following way: for $D$ and $D'$ in $\lin^{(q,a)}(\Omega)$ defined as
\begin{align*}
D &= \omega_1\wedge\LL_{ Id} + \left(\frac{1}{q}d\omega_1+\omega_2\right)\wedge i_{ Id} + (t\ \lambda_{d\omega_1}+\lambda_\mu)\\
D' &= \omega_1'\wedge\LL_{ Id} + \left(\frac{1}{q}d\omega_1'+\omega_2'\right)\wedge i_{ Id} + (t'\ \lambda_{d\omega_1'}+\lambda_{\mu'})
\end{align*}
we define the addition as
\[\begin{aligned}
&D+D' := (\omega_1+\omega_1')\wedge\LL_{ Id} + \left(\frac{1}{q}\ d\big(\omega_1+\omega_1'\big)+\big(\omega_2+\omega_2'\big)\right)\wedge i_{ Id} + \\ 
&\hspace{6cm} + \bigg[(t+t')\ \lambda_{d(\omega_1+\omega_1')}+ \lambda_{(\mu + \mu')}\bigg].
\end{aligned}
\]
Also, there is no ambiguity in the way these differential forms are written, since $\frac{1}{q}d\omega_1+\omega_2$ and $t\ d\omega_1+\mu$ are the addition of a radial plus an exact term, for which eq. \ref{lie2} assures uniqueness of writing.
\end{rem}

\

With the following theorem we classify which differential operators arising from $ Id$ can be linearized.

\begin{teo}\label{teo1} Let $D\in\dif_{Id}^{(q,a)}(\widetilde{\Omega})$. Then there exists $\alpha$ and $\beta$ that verify the conditions of Definition \ref{def-lin} making $D$ an $S$-linear operator if and only if $D\in\lin^{(q,a)}(\widetilde{\Omega})$, for $q\geq 1$. If $D\in\lin^{(q,a)}(\widetilde{\Omega})$ is given by
\[
D(\tau) = \omega_{1}\wedge \LL_{Id} +  \left(\frac{1}{q}\ d\omega_{1} + \omega_{2}\right)\wedge i_{Id} + (t \ \lambda_{d\omega_{1}} +\lambda_\mu)
\]
with $\omega_1\neq 0$, then $\alpha$ and $\beta$ can be chosen to be
\[
\alpha(r,b,c) := \frac{b-a\left(\frac{r}{q}+(-1)^{q}\,t\right)}{b+c-a\left(\frac{r}{q}+(-1)^{q}\,t\right)} \qquad \beta(r,b,c) := \frac{\alpha(r,b,c)}{b-a\left(\frac{r}{q}+(-1)^{q}\,t\right)}
\]
When $\omega_1=0$, the usual multiplication law can be used.
\end{teo}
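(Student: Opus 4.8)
The plan is to reduce the whole statement to an explicit coefficient comparison. First I would use the two identities $\LL_{Id}\tau=d\tau$ and $i_{Id}\tau=r\,\tau$ recorded just above the theorem to write, for $\tau\in\Omega^r(b)$,
\[
D(\tau)=\widetilde\omega_1\wedge d\tau+r\,\widetilde\omega_2\wedge\tau+\widetilde\mu\wedge\tau ,
\]
so that a general element of $\dif^{(q,a)}_{Id}$ is completely encoded by the three forms $\widetilde\omega_1\in\Omega^{q-1}(a)$ and $\widetilde\omega_2,\widetilde\mu\in\Omega^{q}(a)$. Next I would expand both sides of condition (b). On $D(f\gt\tau)$ I would differentiate through $d(f\tau)=df\wedge\tau+f\,d\tau$ and repeatedly use the radial identity $d\,i_R\tau=b\,\tau-i_R d\tau$ coming from eq.\ \ref{lie2}; on $f\gt D(\tau)$ I would use that $i_R$ is a graded antiderivation together with $i_R\,df=R(f)=c\,f$ (Euler's relation, for $f\in S(c)$).

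Both sides then become $\CC$-linear combinations of a short list of ``monomial types'' built from $df,\widetilde\omega_1,\widetilde\omega_2,\widetilde\mu$ and $\tau,d\tau,i_R\tau,i_R d\tau$. The key technical step, and the one I expect to be the main obstacle, is to argue that as $f$ ranges over $S$ and $\tau$ over $\Omega^r$ these types are linearly independent, so that $D(f\gt\tau)=f\gt D(\tau)$ forces the coefficients to agree one by one; this must be carried out while tracking the signs produced by the skew-commutativity of $\Omega$ (reordering $df$ past the $\widetilde\omega_i$) and by the antiderivation rule for $i_R$, which is where the factors $(-1)^{q}$ in the statement originate. I expect the comparison to yield three families of conditions: a shift invariance $\alpha(r,b,c)=\alpha(r+q,b+a,c)$ and the same for $\beta$ (from the types carrying $f$ or $df\wedge\widetilde\omega_i$); the vanishing $i_R\widetilde\omega_1=0$, arising from the lone type $df\wedge i_R\widetilde\omega_1\wedge d\tau$, which has no counterpart on the $D(f\gt\tau)$ side; and a relation of the shape $\alpha=\beta\,(b-\kappa_r)$ together with a proportionality $r\,i_R\widetilde\omega_2+i_R\widetilde\mu=\pm\,\kappa_r\,\widetilde\omega_1$, in which $\kappa_r$ is forced to be affine in $r$.

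I would then bring in condition (c). Substituting $f\gt\tau=\alpha\,f\tau+\beta\,df\wedge i_R\tau$ into $(gf)\gt\tau=g\gt(f\gt\tau)$ and again matching the three resulting types $gf\tau$, $g\,df\wedge i_R\tau$ and $f\,dg\wedge i_R\tau$ produces three functional equations; using $\alpha=\beta\,(b-\kappa_r)$ from the previous step I expect all three to collapse to the single cocycle identity $\alpha(r,b,c+d)=\alpha(r,b,c)\,\alpha(r,b+c,d)$. Together with the normalization (a) and the shift invariance this forces
\[
\alpha(r,b,c)=\frac{b-\kappa_r}{\,b+c-\kappa_r\,},\qquad \beta(r,b,c)=\frac{1}{\,b+c-\kappa_r\,},\qquad \kappa_{r+q}=\kappa_r+a ,
\]
whence $\kappa_r=\tfrac{a}{q}\,r+\mathrm{const}$; this is the displayed solution (the non-uniqueness left by the cocycle is exactly the ``can be chosen'' of the statement), and the constant is read off from the $\widetilde\mu$-part as the coefficient $t$.

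Finally I would translate the form conditions back into membership in $\lin^{(q,a)}$. Writing $\widetilde\omega_2$ and $\widetilde\mu$ through the radial-plus-exact decomposition of eq.\ \ref{lie2} and using $i_R d\widetilde\omega_1=a\,\widetilde\omega_1$ (valid because $i_R\widetilde\omega_1=0$), the proportionality relations pin the exact parts of $\widetilde\omega_2$ and $\widetilde\mu$ to be $\tfrac1q d\omega_1$ and $t\,d\omega_1$, while leaving their radial parts $\omega_2,\mu$ free, which is precisely the description of $\lin^{(q,a)}$; this settles the ``only if'' direction. For ``if'' I would run the same computation forward, verifying (a), (b), (c) for the explicit $\alpha,\beta$, and dispose of the degenerate case $\omega_1=0$ by observing that then $D=r\,\widetilde\omega_2\wedge(-)+\widetilde\mu\wedge(-)$ is already $S$-linear for the ordinary product, i.e.\ $\alpha\equiv1$, $\beta\equiv0$.
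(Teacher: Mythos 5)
Your proposal follows essentially the same route as the paper: expand both sides of conditions (b) and (c) using $\LL_{Id}\tau=d\tau$, $i_{Id}\tau=r\tau$ and the radial/exact decomposition of eq.~\ref{lie2}, match coefficients of the linearly independent monomial types to extract the shift invariance $\alpha(r,b,c)=\alpha(r+q,b+a,c)$, the vanishing of the exact parts (equivalently $i_R\omega_1=i_R\omega_2=i_R\mu=0$), the relation $\beta=\alpha/(b-\kappa_r)$ with $\kappa_r$ affine in $r$ of slope $a/q$, and the multiplicative cocycle from (c), then exhibit the displayed $\alpha,\beta$ as a solution. This matches the paper's argument step for step (your parenthetical about non-uniqueness is the right reading, since the paper's remark after the theorem shows the cocycle system does not force the displayed formula uniquely), so the proposal is correct and not genuinely different.
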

\begin{proof} Take $D\in\dif_{Id}^{(q,a)}(\widetilde{\Omega})$. Then $D$ can be written as
\[
D = \omega_1\wedge \LL_{Id}+  \omega_2\wedge i_L + \lambda_\mu
\]
for some $\omega_1\in\Omega^{q-1}(a)$ and $\omega_2,\mu\in\Omega^q(a)$.

It will be convenient to decompose $\omega_1,\omega_2$ and $\mu$ in the following way
\begin{align*}
\omega_1 &= \omega_{1r} + \omega_{1d} \qquad \qquad \omega_2 = \omega_{2r} + \omega_{2d} + t_1 \ d\omega_{1r}\qquad \qquad \mu = \mu_{r} + \mu_{d} + t_2\ d\omega_{1r}
\end{align*}
where the subindex $r$ and $d$ denote radial and exact terms, and $\omega_{2d}$ and $\mu_{d}$ are linearly independent to $d\omega_{1r}$. For $\tau\in\Omega^r(b)$, we then have
\begin{align}
D(\tau) &= (\omega_{1r}+\omega_{1d})\wedge d\tau + \Big[ (rt_1+t_2) \ d\omega_{1r} + (r \omega_{2r}+\mu_{r}) + (r \omega_{2d}+\mu_{d})\Big]\wedge\tau \label{D-expr1}\\
D(\tau) &= (\omega_{1r}+\omega_{1d})\wedge d\tau + \Big[ (rt_1+t_2) \ d\omega_{1r} + \nu_r + \nu_d\Big]\wedge\tau \label{D-expr2}
\end{align}
where we are writing $\nu_r = r \omega_{2r}+\mu_{r}$ and $\nu_d = r \omega_{2d}+\mu_{d}$.

\

We now want to see under what conditions we have the equalities
\begin{enumerate}
\item[b)] $f\cdot D(\tau) = D(f\cdot\tau)$
\item[c)] $g\cdot(f\cdot \tau) = (gf)\cdot\tau$
\end{enumerate}
for the action defined in Definition \ref{def-lin}: $f\cdot \tau = \alpha\ f\ \tau + \beta\ df\wedge i_R\tau$.

\

From $b)$ we get 
\begin{align}\label{equ1}
&f\cdot D(\tau) = f\cdot\bigg\{(\omega_{1r}+\omega_{1d})\wedge d\tau + \Big[(rt_1+t_2)\ d\omega_{1r} + \nu_r + \nu_d \Big]\wedge\tau\bigg\} = \nonumber\\
&= \alpha(r+q,b+a,c)\ f\ \bigg\{(\omega_{1r}+\omega_{1d})\wedge d\tau + \Big[(rt_1+t_2)\ d\omega_{1r} + \nu_r + \nu_d \Big]\wedge\tau\bigg\} + \nonumber\\
&\hspace{.2cm}+ \beta(r+q,b+a,c)\ df\wedge i_R\Big((\omega_{1r}+\omega_{1d})\wedge d\tau + \Big[(rt_1+t_2)\ d\omega_{1r} + \nu_r + \nu_d \Big]\wedge\tau\Big) = \nonumber\\
&= \alpha(r+q,b+a,c)\ f\ \bigg\{(\omega_{1r}+\omega_{1d})\wedge d\tau + \Big[(rt_1+t_2)\ d\omega_{1r} + \nu_r + \nu_d \Big]\wedge\tau\bigg\} + \nonumber\\
&\hspace{.2cm}+\beta(r+q,b+a,c)df\wedge\bigg\{ i_R\omega_{1d}\wedge d\tau + (-1)^{q-1}(\omega_{1r}+\omega_{1d})\wedge i_Rd\tau +  \nonumber\\
&\hspace{.2cm} + \Big[a(rt_1+t_2)\ \omega_{1r}+i_R\nu_d\Big] \wedge \tau + (-1)^{q}\Big[(rt_1+t_2)\ d\omega_{1r} + \nu_r + \nu_d \Big]\wedge i_R\tau \bigg\}
\end{align}
and, writing $\alpha=\alpha(r,b,c)$,
\begin{align}\label{equ2}
&D(f\cdot\tau) = D\Big(\alpha\ f\ \tau + \beta\ df\wedge i_R\tau\Big) = \nonumber\\
&= \alpha \ (\omega_{1r}+\omega_{1d})\wedge df\wedge \tau + \alpha f \ (\omega_{1r}+\omega_{1d})\wedge d\tau - \beta \ (\omega_{1r}+\omega_{1d})\wedge df\wedge di_R\tau + \nonumber\\
&\hspace{1cm}+ \Big[(rt_1+t_2)\ d\omega_{1r} + \nu_r + \nu_d \Big]\wedge \Big(\alpha f\ \tau + \beta\ df\wedge i_R\tau\Big).
\end{align}

\

Taking the coefficients from every different term in eqs. \ref{equ1} and \ref{equ2}, we get the system
\[
\left\{
\begin{aligned}
&{\rm I})\ f\ \omega_{1r}\wedge d\tau: & \alpha(r+q,b+a,c)\ = \alpha(r,b,c)\ \\
&{\rm II})\ f\ \omega_{1d}\wedge d\tau: & \alpha(r+q,b+a,c)\ = \alpha(r,b,c)\ \\
&{\rm III})\ f\ d\omega_{1r}\wedge\tau: & \alpha(r+q,b+a,c)(rt_1+t_2)\ = \ \alpha(r,b,c)(rt_1+t_2)\\
&{\rm IV})\ f\ \nu_r\wedge\tau: & \alpha(r+q,b+a,c)\ = \alpha(r,b,c)\ \\
&{\rm V})\ f\ \nu_d\wedge\tau: & \alpha(r+q,b+a,c)\ = \alpha(r,b,c)\ \\
&{\rm VI})\ df\wedge i_R\omega_{1d}\wedge d\tau: &  \beta(r+q,b+a,c)\ = 0\ \\
&{\rm VII})\ df\wedge\omega_{1r}\wedge i_Rd\tau: & (-1)^{q-1}\beta(r+q,b+a,c)\ = (-1)^{q-1}\beta(r,b,c)\ \\
&{\rm VIII})\ df\wedge \omega_{1d}\wedge i_Rd\tau: & (-1)^{q-1}\beta(r+q,b+a,c)\ = (-1)^{q-1}\beta(r,b,c)\ \\
&{\rm IX})\ df\wedge\omega_{1r}\wedge\tau: & \beta(r+q,b+a,c)a(rt_1+t_2)\ =\hspace{1cm}\\
& & = (-1)^{q-1}(\alpha(r,b,c) -b\beta(r,b,c))\ \\
&{\rm X})\ df\wedge i_R\nu_d\wedge\tau: & \beta(r+q,b+a,c)\ = 0\ \\
&{\rm XI})\ df\wedge d\omega_{1r}\wedge i_R\tau: & \beta(r+q,b+a,c)(-1)^{q}(rt_1+t_2) = \hspace{1cm}\\
& & = \beta(r,b,c)(-1)^{q}(rt_1+t_2)\\
&{\rm XII})\ df\wedge \nu_r\wedge i_R\tau: & (-1)^{q}\beta(r+q,b+a,c) \ =\ (-1)^{q}\beta(r,b,c)\\
&{\rm XIII})\ df\wedge\nu_d\wedge i_R\tau: & (-1)^{q}\beta(r+q,b+a,c) \ =\ (-1)^{q}\beta(r,b,c)
\end{aligned}
\right.
\]
which can be reduced to
\[
\left\{
\begin{aligned}
&{\rm XI})\ df\wedge d\omega_{1r}\wedge i_R\tau: & (-1)^{q-1}\beta(r+q,b+a,c)\ = (-1)^{q-1}\beta(r,b,c)\ \\
&{\rm IV})\ f\ \nu_r\wedge\tau: & \alpha(r+q,b+a,c)\ = \alpha(r,b,c)\ \\
&{\rm IX})\ df\wedge\omega_{1r}\wedge\tau: & \beta(r+q,b+a,c)a(rt_1+t_2)\ = \hspace{1cm}\\
& & = (-1)^{q-1}(\alpha(r,b,c) -b\beta(r,b,c))\ \\
&{\rm X})\ df\wedge i_R\nu_d\wedge\tau: & \beta(r+q,b+a,c)\ = 0
\end{aligned}
\right.
\]

\

It is clear the sufficiency of these equalities to accomplish $b)$. For the necessity we proceed as follows:
\begin{itemize}
\item if $i_R\tau = 0$, then the terms involving eqs. ${\rm I}),\ldots,{\rm X})$ must coincide, leaving the terms of eqs. ${\rm XI}),{\rm XII})$ and ${\rm XIII})$ apart which can be considered in another system. Since these equations are linearly independent by hypothesis, we get that eq. ${\rm XI})$ must be satisfied.
\item if $\tau=df\wedge\rho_d$, for some exact differential form $\rho_d$, then only eqs. ${\rm III}),{\rm IV})$ and ${\rm V})$ survive. Also they are linearly independent by hypothesis, then eq. ${\rm IV})$ must be also satisfied.
\item by the previous arguments, we can clear off eqs. ${\rm I})$ to ${\rm V})$, ${\rm VII}),{\rm VIII})$ and ${\rm XI})$,${\rm XII})$,\linebreak${\rm XIII})$. Taking now $\tau=\tau_d$ we get only eqs. ${\rm IX})$ and ${\rm X})$ which are also linearly independent.
\end{itemize}

\

Using eq. ${\rm IX})$ we can clear $\beta$ as
\[
\beta(r,b,c) = \frac{\alpha(r,b,c)}{b-(-1)^qa(rt_1+t_2)}.
\]
From this equality and eqs. ${\rm I})$ and ${\rm VII})$ we get the formula
\[
\frac{\alpha(r,b,c)}{b-(-1)^qa(rt_1+t_2)} = \frac{\alpha(r,b,c)}{(b+a)-(-1)^qa\big((r+q)t_1+t_2)\big)}
\]
from where we are able to clear $t_1$ as $\frac{(-1)^q}{q}$ and obtain the system
\begin{equation}\label{sis-1}
\left\{
\begin{aligned}
& \alpha(r,b,c)\ = \alpha(r+q,b+a,c)\ \\
& \beta(r,b,c) \ = \ \frac{\alpha(r,b,c)}{ b-a\left(\frac{r}{q}+(-1)^{q}\,t_2\right)}\\
\end{aligned}
\right.
\end{equation}
From eqs. ${\rm VI})$ and ${\rm X})$ we see that $\omega_{1d}=\nu_d=0$, since $\alpha$ and $\beta$ must be non trivial. Then recalling the expressions of eqs. \ref{D-expr1} and \ref{D-expr2}, the differential operator $D$ must be of the form
\begin{align*}
D(\tau) &= \omega_{1r}\wedge d\tau + \Big[ \left(\frac{r}{q}+t_2\right) \ d\omega_{1r} + \nu_r \Big]\wedge\tau\\
D(\tau) &= \omega_{1r}\wedge d\tau +  r\left(\frac{1}{q}\ d\omega_{1r} + \omega_{2r}\right)\wedge\tau + (t_2 \ d\omega_{1r} +\mu_{r})\wedge\tau.
\end{align*}
This way, we have that
\[
D = \omega_{1r}\wedge \LL_{Id} +  \left(\frac{1}{q}\ d\omega_{1r} + \omega_{2r}\right)\wedge i_{Id}+ (t_2 \ \lambda_{d\omega_{1r}} +\lambda_{\mu_{r}})
\]
showing that $D\in\lin^{(q,a)}(\widetilde{\Omega})$.

\

From $c)$ we have
\begin{align}\label{equ3}
g\cdot&(f\cdot\tau) = g\cdot\big(\alpha(r,b,c)\ f\ \tau + \beta(r,b,c)\ df\wedge i_R\tau\big) = \nonumber\\
&=\alpha(r,b+c,e)g\ \Big(\alpha(r,b,c)\ f\ \tau + \beta(r,b,c)\ df\wedge i_R\tau\Big) + \nonumber\\
&\hspace{1cm}+ \beta(r,b+c,e)\ dg\wedge i_R\Big(\alpha(r,b,c)\ f\ \tau + \beta(r,b,c)\ df\wedge i_R\tau\Big) = \nonumber\\
&= \alpha(r,b+c,e)\alpha(r,b,c)\ gf\ \tau + \alpha(r,b+c,e)\beta(r,b,c)\ g\ df\wedge i_R\tau \ + \nonumber\\
&\hspace{0cm}+ \beta(r,b+c,e)\alpha(r,b,c)\ f \ dg\wedge i_R\tau + \beta(r,b+c,e)\beta(r,b,c)c\ f\ dg\wedge i_R\tau 
\end{align}
and
\begin{align}\label{equ4}
(gf)&\cdot\tau = \alpha(r,b,c+e)\ gf\ \tau + \beta(r,b,c+e)\ d(gf)\wedge i_R\tau = \nonumber\\
&= \alpha(r,b,c+e)\ gf\ \tau + \beta(r,b,c+e)g\ df\wedge i_R\tau + \beta(r,b,c+e)f\ dg\wedge i_R\tau.
\end{align}

\

Again, joining eqs. \ref{equ3} and \ref{equ4} as before we get the following  system of equations
\begin{equation}\label{sis-2}
\left\{
\begin{aligned}
&{\rm I})\ gf\ \tau: &\alpha(r,b+c,e)\alpha(r,b,c)\ = \ \alpha(r,b,c+e)\\
&{\rm II})\ g\ df\wedge i_R\tau: &\alpha(r,b+c,e)\beta(r,b,c)\ = \ \beta(r,b,c+e)\\
&{\rm III})\ f\ dg\wedge i_R\tau: &\Big(\beta(r,b+c,e)\alpha(r,b,c)+\beta(r,b+c,e)\beta(r,b,c)c\Big)\ = \\
&  & =\ \beta(r,b,c+e)
\end{aligned}
\right.
\end{equation}
which implies condition $c)$.

\

For the necessity we can assume $i_R\tau=0$ from where we get eq. ${\rm I})$. Removing that equation from the system, we can choose $f$ and $g$ linearly independent and we are done.

\

%

Putting together eqs. \ref{sis-1} and \ref{sis-2} we get the conditions
\[
\mathcal{S}:\ \left\{
\begin{aligned}
&\alpha(r,b,c)\ = \ \alpha(r+2,b+a,c) \ \\
& \alpha(r,b,c)\ = \ \frac{\alpha(r,b,c+e)}{\alpha(r,b+c,e)} \\
&\beta(r,b,c) \ = \ \frac{\alpha(r,b,c)}{ b-a\left(\frac{r}{q}+(-1)^{q}\,t_2\right)}\\
\end{aligned}
\right.
\]

\

The first two equations suggest a linear relation between the first two coordinates, for which the denominator of the third equation propose a formula for that. The second equation suggest a multiplicative relation between the last two coordinates.

\

As stated in the theorem, a formula that satisfies system $\mathcal{S}$ can be given by
\[
\alpha(r,b,c) := \frac{b-a\left(\frac{r}{q}+(-1)^{q}\,t_2\right)}{b+c-a\left(\frac{r}{q}+(-1)^{q}\,t_2\right)} \qquad 
\beta(r,b,c) := \frac{\alpha(r,b,c)}{b-a\left(\frac{r}{q}+(-1)^{q}\,t_2\right)}
\]
which clearly verifies condition $a)\alpha(-,-,0)=1$ of Definition \ref{def-lin}.

\

For the case $\omega_1=0$, we can choose $\alpha=1$ and $\beta=0$ which reduces to the usual multiplication.
\end{proof}

\begin{rem} A more general formula to the one given in the previous theorem can be given by, for an appropriate $t$, 
\[
\alpha(r,b,c) := \frac{F\Big(L\big(b\big)-L\left(a\left(\frac{r}{q}+(-1)^{q}\,t\right)\right)}{F\Big(L\big(b+c\big)-L\left(a\left(\frac{r}{q}+(-1)^{q}\,t\right)\right)} \quad
\beta(r,b,c) := \frac{\alpha(r,b,c)}{ b-a\left(\frac{r}{q}+(-1)^{q}\,t\right)}
\]
where $F$ is any function and $L$ is a linear function.
\end{rem}

\

\section{Applications}\label{applications}

\

Along this section we show two different applications of the formula for deformations that we gave in Definition \ref{def-lin}.
The first one shows how to linearize the usual exterior differential as we mentioned in eq. \ref{ej1}. The second one is related to the regularity complex of Camacho and Lins-Neto given in the introduction, see eq. \ref{reg-intro}.

\

\subsection{Exterior differential}\label{ext-diff}

\

\begin{deff}\label{def-d} For $f\in S(c)$ and $\tau\in\Omega^r(b)$, we define
\begin{equation*}
f\dt \tau := \frac{b}{b+c}\ \left(\ f\tau\  +\ \frac{1}{b}\ df\wedge i_R\tau \ \right)
\end{equation*}
when $b$ or $c$ is not null and define $f\cdot_d\tau = f\tau$ when $b,c=0$. We denote by $\Omega^r_d=\Omega^r$ and $\Omega_{d}=\bigoplus_{r\geq 0} \Omega^r_d$ to the $\CC$-vector spaces with this action from $S$.
\end{deff}

\

We then have:

\begin{prop}\label{acc-3} The exterior differential
\[
\xymatrix{\Omega_d \ar[r]^d & \Omega_d}
\]
is a morphism of $S$-modules and the $S$-modules $\Omega^r_d$ are finitely generated.
\end{prop}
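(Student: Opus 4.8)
The plan is to recognize the exterior differential as a linearizable operator in the sense of Definition \ref{def-lin} and to deduce both assertions from the machinery already in place: the $S$-linearity from Theorem \ref{teo1}, and the finite generation from Proposition \ref{accion-gen}, together with one elementary step to pass from a truncation to the full module.

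First I would locate $d$ inside the classification. Since $\LL_{Id}(\tau) = d\tau$, and $d$ raises the form degree by one while preserving the radial grading (each $x_i$ and each $dx_i$ carries weight $+1$, and $d$ trades a factor $x_i$ for a factor $dx_i$), we have $d = \LL_{Id} \in \dif_{Id}^{(1,0)}(\Omega)$, that is $q=1$ and $a=0$, with $\omega_1 = 1 \in \Omega^0(0)$ and $\omega_2 = \mu = 0$. Because $d\omega_1 = d(1) = 0$ and $i_R(1)=0$, the constraints $i_R\omega_1 = i_R\omega_2 = i_R\mu = 0$ hold trivially, so $d \in \lin^{(1,0)}(\Omega)$. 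Specializing the formulas of Theorem \ref{teo1} at $q=1$, $a=0$ (the value of $t$ being irrelevant since $a\big(\tfrac{r}{q}+(-1)^q t\big)=0$) gives $\alpha(r,b,c) = \tfrac{b}{b+c}$ and $\beta(r,b,c) = \tfrac{1}{b+c}$, whence $f\cdot_D\tau = \tfrac{b}{b+c}\,f\tau + \tfrac{1}{b+c}\,df\wedge i_R\tau = f\dt\tau$, exactly the action of Definition \ref{def-d}. Theorem \ref{teo1} then yields that $\Omega_d$ is a graded $S$-module and that $d\colon\Omega_d\to\Omega_d$ is $S$-linear; the degenerate layer $b=c=0$ is governed by the separate clause of Definition \ref{def-d} and is checked by hand, since for a constant $\tau$ both $d(f\dt\tau)$ and $f\dt d\tau$ vanish.

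For finite generation I would apply Proposition \ref{accion-gen}. Here $\alpha(r,b,1)^2-\beta(r,b,1)^2 = \tfrac{b^2-1}{(b+1)^2} = \tfrac{b-1}{b+1}$, which is nonzero precisely when $b\neq 1$. Choosing any $n_r > n$ (so $n_r \geq 2$, as $n\geq 1$), the hypothesis of Proposition \ref{accion-gen} holds for all $b\geq n_r$, and the truncation $\bigoplus_{b\geq n_r}\Omega^r_d(b)$ is finitely generated by elements of degree $n_r$. This truncation is a submodule of $\Omega^r_d$, since $f\dt\tau \in \Omega^r(b+c)$ with $b+c\geq b\geq n_r$.

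It remains to reach the full module $\Omega^r_d = \Omega^r$. The complementary pieces $\Omega^r(b)$ with $0\leq b < n_r$ are finitely many finite-dimensional $\CC$-vector spaces, so adjoining a $\CC$-basis of each to the generators provided by Proposition \ref{accion-gen} yields a finite generating set: every homogeneous element of degree $\geq n_r$ lies in the finitely generated truncation, and every homogeneous element of degree $< n_r$ is a $\CC$-combination of the adjoined basis vectors. I expect this last step to be the only real obstacle, since Proposition \ref{accion-gen} controls just the stably isomorphic truncation and in fact $\alpha^2-\beta^2$ does vanish at $b=1$; the point is that the finitely many anomalous low-degree layers are each finite-dimensional over $\CC$ and so can be absorbed harmlessly into the generating set.
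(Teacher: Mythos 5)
Your proof is correct and follows essentially the same route as the paper: identify $d=\LL_{Id}\in\lin^{(1,0)}(\Omega)$, read off $\alpha(r,b,c)=\tfrac{b}{b+c}$ and $\beta(r,b,c)=\tfrac{1}{b+c}$ from Theorem \ref{teo1}, and invoke Proposition \ref{accion-gen} for finite generation. Your computation $\alpha(r,b,1)^2-\beta(r,b,1)^2=\tfrac{b-1}{b+1}$ (vanishing only at $b=1$, which is harmless since $n_r>n\geq 1$ forces $b\geq 2$) and your explicit absorption of the finitely many low-degree graded pieces into the generating set are actually more careful than the paper's own verification, which only treats the truncated module and whose displayed computation of $\alpha^2-\beta^2$ is garbled.
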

\begin{proof}
It is clear that $d \in\lin^{(1,0)}(\Omega)$. Then, following Theorem \ref{teo1}, we get the formula proposed in Definition \ref{def-d}.

\

To see that $\Omega_d$ is finitely generated we have that  $\alpha^2(r,b,1)-\beta^2(r,b,1)=0$ if and only if
\[
\left(\frac{b-1}{b}\right)^2 - \left(\frac{b-1}{b^2}\right)^2 = \frac{(b-1)^2(b^2-1)}{b^4} = 0.
\]
\[
\left(\frac{b}{b+1}\right)^2 - \left(\frac{b}{(b+1)^2}\right)^2 = \frac{b^2[(b+1)^2-1]}{(b+1)^4} = 0.
\]

Then, the conditions of Proposition \ref{accion-gen} are verified for $b\geq 1$ and the result follows.
\end{proof}

\

\subsection{Regularity complex}\label{reg-complex}

\

In \cite{moli} we introduce a long complex $C^\bullet(\omega)$ of differential operators from the short complex of Camacho and Lins-Neto, see \cite[Section III.I, p.~17]{caln} or eq. \ref{reg-intro}. In the same work we give $\CC$-linear isomorphisms of $C^\bullet(\omega)$ to an $S$-linear complex and use it to obtain geometric information of the singular locus of the foliation defined by $\omega$ as well as its relation with first order unfoldings of $\omega$. Here we expose a different approach to linearize the complex $C^\bullet(\omega)$.

\

Let us recall from \cite[Section 6.1, p.~19]{moli} the following definition:

\

Let $\omega\in\Omega^1(a)$ such that $i_R\omega = 0$ and $\omega\wedge d\omega = 0$. We define the differential operator $\omega\vartriangle \ \in\dif^{(2,a)}(\Omega)$ as
\[
\xymatrix@R=0pt@C=50pt{ \Omega^r \ar[r]^{\omega\vartriangle} & \Omega^{r+2} \\
\save[] *{\tau\ }="s" \restore & \save[] +<50pt,0pt> *{\ \omega\vartriangle\tau:=\omega\wedge d\tau + \kappa(r)\ d\omega\wedge \tau}="t" \restore & \ar@{|->}"s";"t"
}
\]
where $\kappa(r) := \frac{r+1}{2}$.

\

Using \cite[Proposition 6.1.2, p.~19]{moli} we know that $\omega\vartriangle$ defines a differential of a complex of $\CC$-vector spaces which allow us to define the graded complex  $C^\bullet(\omega)$ as
\[
\xymatrix{
C^\bullet(\omega):& T\ar[r]^-{\omega\vartriangle} & \Omega^1 \ar[r]^-{\omega\vartriangle} & \Omega^3 \ar[r]^-{\omega\vartriangle} & \ldots & &
}
\]
where the 0-th differential is defined as $\omega\vartriangle X:=\LL_X(\omega)= i_Xd\omega + di_X\omega$.

\

\begin{deff}\label{def-t} For $f\in S(c)$ and $\tau\in\Omega^r(b)$ we define the action
\begin{equation*}
f\vt \tau := \frac{1}{b+c-\kappa(r)a}\ \Big[\  \big(b-\ \kappa(r)a\big) f\tau\  +\  df\wedge i_R\tau\ \Big]
\end{equation*}
and denote by $\Omega^r_{\omega\vartriangle}= \bigoplus_{b> \kappa(r)a }\Omega^r(b)$ and $\Omega_{\omega\vartriangle}=\bigoplus_{r\geq 0}\Omega^r_{\omega\vartriangle}$ to the $\mathbb{C}$-vector spaces with this action from $S$.

For $f\in S(c)$ and $X\in T(b)$ we also define
\[
f\vt X = \frac{b}{b+c} \ fX
\]
and denote by $T_{\omega\vartriangle}=\bigoplus\limits_{b\geq 1}T(b)$ to the $\mathbb{C}$-vector space with this action from $S$.
\end{deff}

\

We then have:

\begin{prop} The complex 
\[
\xymatrix@R=0pt@C=35pt{
C^\bullet_{\omega\vartriangle}(\omega):& T_{\omega\vartriangle} \ar[r]^-{\omega\vartriangle} & \Omega^1_{\omega\vartriangle} \ar[r]^-{\omega\vartriangle} & \Omega^3_{\omega\vartriangle} \ar[r]^-{\omega\vartriangle} & \ldots\\
}
\]
is a complex of $S$-modules and the $S$-modules $\Omega^r_{\omega\vartriangle}$  and $T_{\omega\vartriangle}$ are finitely generated.
\end{prop}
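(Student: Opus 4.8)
The complex $C^\bullet_{\omega\vartriangle}(\omega)$ has two sorts of differentials: the arrows $\omega\vartriangle\colon\Omega^r_{\omega\vartriangle}\to\Omega^{r+2}_{\omega\vartriangle}$, which I expect to fall under Theorem \ref{teo1}, and the $0$-th differential $X\mapsto\LL_X(\omega)$, whose source $T_{\omega\vartriangle}$ is not a module of differential forms and so must be treated separately. The plan is to establish the $S$-module structures together with the $S$-linearity of each differential, to note that the complex identities are inherited from the $\CC$-linear complex of \cite{moli}, and finally to prove finite generation.

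For the arrows, first I would recognize $\omega\vartriangle$ as linearizable. Since $\LL_{Id}(\tau)=d\tau$ and $i_{Id}(\tau)=r\tau$ on $\Omega^r$,
\[
\omega\vartriangle\tau=\omega\wedge d\tau+\tfrac{r+1}{2}\,d\omega\wedge\tau=\omega\wedge\LL_{Id}(\tau)+\tfrac12\,d\omega\wedge i_{Id}(\tau)+\tfrac12\,\lambda_{d\omega}(\tau),
\]
so $\omega\vartriangle\in\lin^{(2,a)}(\Omega)$ with $q=2$, $\omega_1=\omega$, $\omega_2=\mu=0$ and $t=\tfrac12$; the hypothesis $i_R\omega=0$ supplies exactly the radial conditions in the definition of $\lin^{(2,a)}$. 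With these values $\tfrac{r}{q}+(-1)^q t=\tfrac{r+1}{2}=\kappa(r)$, so Theorem \ref{teo1} yields $\alpha(r,b,c)=\frac{b-\kappa(r)a}{b+c-\kappa(r)a}$ and $\beta(r,b,c)=\frac{1}{b+c-\kappa(r)a}$, which are precisely the coefficients of $\vt$ in Definition \ref{def-t}. Hence each $\Omega^r_{\omega\vartriangle}$ is a graded $S$-module (the truncation $b>\kappa(r)a$ keeps the denominators away from $0$) and every $\omega\vartriangle$ between them is $S$-linear.

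The main obstacle is the $0$-th differential. Here I would first confirm that $f\vt X=\tfrac{b}{b+c}fX$ is an $S$-action on $T_{\omega\vartriangle}$: condition $a)$ is clear and associativity reduces to $\tfrac{b+c}{b+c+e}\cdot\tfrac{b}{b+c}=\tfrac{b}{b+c+e}$. For $S$-linearity of $X\mapsto\LL_X(\omega)$, Cartan's formula gives $\LL_{fX}(\omega)=f\LL_X(\omega)+df\wedge i_X\omega$, so applying the differential to $f\vt X$ produces
\[
\tfrac{b}{b+c}\LL_{fX}(\omega)=\tfrac{b}{b+c}\bigl(f\LL_X(\omega)+df\wedge i_X\omega\bigr),
\]
whereas the target action ($r=1$, $\kappa(1)=1$) gives $f\vt\LL_X(\omega)=\tfrac{b}{b+c}f\LL_X(\omega)+\tfrac{1}{b+c}df\wedge i_R\LL_X(\omega)$. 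The two coincide exactly when $i_R\LL_X(\omega)=b\,i_X\omega$, and this is the key computation: from $\LL_X\omega=i_Xd\omega+d\,i_X\omega$, anticommuting $i_R$ past $i_X$ and using $i_Rd\omega=\LL_R\omega-d\,i_R\omega=a\omega$ (where $i_R\omega=0$ is essential) gives $i_Ri_Xd\omega=-a\,i_X\omega$, while $i_Rd(i_X\omega)=\LL_R(i_X\omega)=(a+b)\,i_X\omega$ since $i_X\omega\in S(a+b)$; the sum is $b\,i_X\omega$, as needed.

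Finally, the relations $\omega\vartriangle\circ\omega\vartriangle=0$ and $\omega\vartriangle\circ\LL_{(-)}(\omega)=0$ are inherited from \cite[Proposition 6.1.2, p.~19]{moli}, since deforming the $S$-action leaves the underlying $\CC$-linear maps unchanged; thus $C^\bullet_{\omega\vartriangle}(\omega)$ is a complex of $S$-modules. For finite generation of $\Omega^r_{\omega\vartriangle}$ I would apply Proposition \ref{accion-gen} after computing
\[
\alpha(r,b,1)^2-\beta(r,b,1)^2=\frac{(b-\kappa(r)a)^2-1}{(b+1-\kappa(r)a)^2},
\]
which vanishes only at the (at most two) degrees $b=\kappa(r)a\pm1$; choosing $n_r>\max(n,\kappa(r)a+1)$ the hypothesis holds for all $b\geq n_r$, and the finitely many excluded low degrees, each finite dimensional over $\CC$, contribute only finitely many further generators. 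For $T_{\omega\vartriangle}$ a direct reduction works: for a monomial $x^\gamma\frac{\partial}{\partial x_i}$ of degree $|\gamma|-1\geq2$ and any $k$ with $\gamma_k\neq0$ one has $x_k\vt\bigl(x^{\gamma-e_k}\frac{\partial}{\partial x_i}\bigr)=\tfrac{|\gamma|-2}{|\gamma|-1}\,x^\gamma\frac{\partial}{\partial x_i}$ with nonzero coefficient, so every element reduces to the finite-dimensional piece $T(1)$, whence $T_{\omega\vartriangle}$ is finitely generated.
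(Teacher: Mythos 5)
Your proof is correct and follows the same route as the paper: identify $\omega\vartriangle$ as $\omega\wedge\LL_{Id}+\frac12 d\omega\wedge i_{Id}+\frac12\lambda_{d\omega}\in\lin^{(2,a)}(\Omega)$ with $t=\frac12$, apply Theorem \ref{teo1} to recover the coefficients of Definition \ref{def-t}, and invoke Proposition \ref{accion-gen} for finite generation. You go further than the paper in two places it leaves implicit: the explicit check that the $0$-th differential $X\mapsto\LL_X(\omega)$ is $S$-linear, via the identity $i_R\LL_X(\omega)=b\,i_X\omega$ (which your computation establishes correctly, and which the paper never addresses), and the treatment of the boundary degree $b=\kappa(r)a+1$, where $\alpha^2-\beta^2$ vanishes even though that degree is included in the truncation $b>\kappa(r)a$ of Definition \ref{def-t} --- your remark that this finite-dimensional piece just contributes finitely many extra generators is the right fix. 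The direct reduction argument for $T_{\omega\vartriangle}$ likewise fills in a step the paper dismisses as clear.
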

\begin{proof}
 Writing $\omega\vartriangle\ $as
\[
\omega\vartriangle\  = \omega\wedge\LL_{Id} + \frac{1}{2}d\omega\wedge i_{Id} + \frac{1}{2}\lambda_{d\omega}
\]
it is clear that $\omega\vartriangle\ \in\lin^{(2,a)}(\Omega)$. Then, following Theorem \ref{teo1}, we get the formula proposed in Definition \ref{def-t}.

\

It is also clear that $T_{\omega\vartriangle}$ is finitely generated. For $\Omega^r_{\omega\vartriangle}$ we have that $\alpha^2(r,b,1)-\beta^2(r,b,1)=0$ if and only if
\begin{align*}
\left(\frac{b-\kappa(r)a}{b+1-\kappa(r)a}\right)^2-\left(\frac{1}{b+1-\kappa(r)a}\right)^2 = \frac{\big(b-\kappa(r)a\big)^2-1}{(b+1-\kappa(r)a)^2} = 0. 
\end{align*}
Then, the conditions of Proposition \ref{accion-gen} are verified for $b>1+\kappa(r)a$ and the result follows.
\end{proof}


\begin{thebibliography}{1}
\providecommand{\url}[1]{{#1}}
\providecommand{\urlprefix}{URL }
\expandafter\ifx\csname urlstyle\endcsname\relax
  \providecommand{\doi}[1]{DOI~\discretionary{}{}{}#1}\else
  \providecommand{\doi}{DOI~\discretionary{}{}{}\begingroup
  \urlstyle{rm}\Url}\fi

\bibitem{caln}
Camacho, C., Lins~Neto, A.: The topology of integrable differential forms near
  a singularity.
\newblock Inst. Hautes \'Etudes Sci. Publ. Math. (55), 5--35 (1982).
\newblock \urlprefix\url{http://www.numdam.org/item?id=PMIHES_1982__55__5_0}

\bibitem{fji}
Cukierman, F., Pereira, J.V., Vainsencher, I.: Stability of foliations induced
  by rational maps.
\newblock Ann. Fac. Sci. Toulouse Math. (6) \textbf{18}(4), 685--715 (2009).
\newblock \urlprefix\url{http://afst.cedram.org/item?id=AFST_2009_6_18_4_685_0}

\bibitem{egaivIV}
Grothendieck, A.: \'{E}l\'ements de g\'eom\'etrie alg\'ebrique. {IV}. \'{E}tude
  locale des sch\'emas et des morphismes de sch\'emas. {IV}.
\newblock Inst. Hautes \'Etudes Sci. Publ. Math. (32), 361 (1967)

\bibitem{herrera-lieberman}
Herrera, M., Lieberman, D.: Duality and the de {R}ham cohomology of
  infinitesimal neighborhoods.
\newblock Invent. Math. \textbf{13}, 97--124 (1971)

\bibitem{jou}
Jouanolou, J.P.: \'{E}quations de {P}faff alg\'ebriques, \emph{Lecture Notes in
  Mathematics}, vol. 708.
\newblock Springer, Berlin (1979)

\bibitem{michor}
Michor, P.W.: Topics in differential geometry, \emph{Graduate Studies in
  Mathematics}, vol.~93.
\newblock American Mathematical Society, Providence, RI (2008)

\bibitem{moli}
Molinuevo, A.: Unfoldings and deformations of rational and logarithmic
  foliations.
\newblock Ann. Inst. Fourier (Grenoble) \textbf{66}(4), 1583--1613 (2016).
\newblock \doi{10.5802/aif.3044}.
\newblock
  \urlprefix\url{http://aif.cedram.org/cedram-bin/article/AIF_2016__66_4_1583_0.pdf}

\bibitem{warner}
Warner, F.W.: Foundations of differentiable manifolds and {L}ie groups,
  \emph{Graduate Texts in Mathematics}, vol.~94.
\newblock Springer-Verlag, New York (1983).
\newblock Corrected reprint of the 1971 edition

\end{thebibliography}


\begin{tabular}{l l}
Ariel Molinuevo$^*$  &\textsf{amoli@dm.uba.ar}\\
&\\
$^*$\textsc{Departamento de Matem\'atica} & \\
\textsc{Pabell\'on I} &\\
\textsc{Ciudad Universitaria} &\\
\textsc{CP C1428EGA} &\\
\textsc{Buenos Aires} &\\
\textsc{Argentina} &
\end{tabular}

\end{document}